\def\cal{\mathcal}
\def\Bbb{\mathbb}
\def\G{\Gamma}
\def\r{\rangle}
\def\l{\langle}
\def\t{\times}
\def\p{\partial}
\newtheorem{thm}{Theorem}[section]
\begin{document}
\title[Vanishing structure set] {Vanishing structure set of $3$-manifolds}
\author[S.K. Roushon]{S.K. Roushon*}
\date{July 06, 2010}
\address{School of Mathematics\\
Tata Institute\\
Homi Bhabha Road\\
Mumbai 400005, India}
\email{roushon@math.tifr.res.in}
\thanks{}
\begin{abstract} In this short note we update a result proved  
in \cite{R4}. This will complete our program of \cite{R0} 
showing that the structure set vanishes for compact  
aspherical $3$-manifolds.\end{abstract}

\keywords{fibered isomorphism conjecture, $3$-manifold groups, structure set,
surgery theory, topological rigidity.}

\subjclass[2000]{57R67, 19D35.}

\maketitle

\section{Introduction} This paper is to note that the program we 
started in \cite{R0} is now complete.

Let us first  
recall that a compact manifold $M$ with boundary is called {\it topologically 
rigid} if any homotopy equivalence $f:(N, \p N)\to (M, \p M)$ from another 
compact manifold with 
boundary, so that 
$f|_{\p N}:\p N\to \p M$ is a homeomorphism is homotopic to a homeomorphism 
relative to boundary.

 Let $M$ be a compact connected  
$3$-manifold whose fundamental group is torsion free.
 
We prove the following theorem.

\begin{thm} \label{main1} If $M$ is aspherical then 
$M\t {\Bbb D}^n$ is topologically 
rigid for $n\geq 2$. Here ${\Bbb D}^n$ 
denotes the $n$-dimensional disc.\end{thm}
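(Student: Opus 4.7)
The plan is to deduce Theorem~\ref{main1} from the vanishing of the topological structure set $\mathcal{S}^{TOP}(M \times \mathbb{D}^n, \partial)$ via Wall's surgery exact sequence, and then to deduce that vanishing from the $L$-theoretic Farrell--Jones Isomorphism Conjecture for $\pi_1(M)$. Since $n \geq 2$, the total dimension $n+3$ is at least $5$ and we lie in the surgery stable range. Topological rigidity of $M \times \mathbb{D}^n$ rel boundary is equivalent to the vanishing of the structure set, which sits in the surgery exact sequence
$$\cdots \to L_{n+4}(\mathbb{Z}\pi_1(M)) \to \mathcal{S}^{TOP}(M \times \mathbb{D}^n, \partial) \to [M \times \mathbb{D}^n/\partial,\, G/TOP] \to L_{n+3}(\mathbb{Z}\pi_1(M)).$$
Identifying the normal invariants with $\mathbb{L}$-theory homology and using that $M$ is a model for $B\pi_1(M)$ (by asphericity), the vanishing reduces to showing that the assembly map $H_k(B\pi_1(M); \mathbb{L}) \to L_k(\mathbb{Z}\pi_1(M))$ is an isomorphism; torsion-freeness of $\pi_1(M)$ guarantees that no Whitehead-group or $\operatorname{Nil}$ summands arise as obstructions.

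The next step is to verify the $L$-theoretic Isomorphism Conjecture for $\pi_1(M)$. This is precisely the program of \cite{R0}: one invokes the geometric decomposition of $M$ (via Thurston--Perelman) to cut $M$ along a canonical family of incompressible tori into Seifert-fibered and complete finite-volume hyperbolic pieces, and then uses the inheritance properties of the fibered version of the Isomorphism Conjecture -- passage to subgroups, to virtually cyclic extensions, and to amalgamated products along subgroups in the admissible family -- to reduce FIC for $\pi_1(M)$ to FIC for the individual pieces. The Seifert-fibered and graph-manifold cases were the subject of \cite{R0} and \cite{R4}, while the hyperbolic pieces are covered by the existing FIC results for CAT$(0)$ and word-hyperbolic groups.

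The principal obstacle, and the reason for the present update, is to close the gluing step cleanly for \emph{every} compact aspherical $3$-manifold: one must propagate FIC from the geometric pieces, through the peripheral tori along which they are amalgamated, to the full $\pi_1(M)$, without losing control over the admissible family of subgroups. Once this is accomplished uniformly across all cases supplied by geometrization -- including those not handled by the formulation of \cite{R4} -- the assembly map is an isomorphism, $\mathcal{S}^{TOP}(M \times \mathbb{D}^n, \partial) = 0$, and the theorem follows. I expect the delicate point to be handling hyperbolic pieces whose peripheral subgroups are not already in the admissible family used for the Seifert pieces, so that the amalgamation inheritance lemma can be applied in a single consistent framework.
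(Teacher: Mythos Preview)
Your surgery-theoretic skeleton is correct and coincides with the paper's: for $n\ge 2$ one is in dimension $\ge 5$, Ranicki identifies ${\cal S}(M\times{\Bbb D}^n,\partial)$ with the algebraic structure group ${\cal S}_{n+3}(M)$, and since $M$ is aspherical this vanishes once the assembly map $H_k(M,{\Bbb L}_0)\to L_k(\pi_1(M))$ is an isomorphism. Note also that the paper disposes of the case $\partial M\neq\emptyset$ at the outset by citing \cite{R0}, so only the closed case needs the argument.

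There is one genuine gap. The claim that ``torsion-freeness of $\pi_1(M)$ guarantees that no Whitehead-group \ldots\ summands arise'' is not correct as stated. The FIC is formulated for $L^{\l -\infty\r}$, and to pass to the decoration appearing in the geometric surgery sequence one must run the Rothenberg sequences, which requires the actual \emph{vanishing} of $Wh(\pi_1(M))$, $\tilde K_0(\pi_1(M))$ and the $K_{-i}(\pi_1(M))$. Torsion-freeness alone does not deliver this; the paper proves it separately as Theorem~\ref{main2}, using geometrization together with the results of Waldhausen, Farrell--Jones, and Plotnick on the individual pieces.

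Your route to the FIC for $\pi_1(M)$ also differs from the paper's, and your final paragraph is speculation rather than argument. The paper does not carry out a JSJ-plus-amalgamation-inheritance proof and never confronts the ``peripheral subgroups of hyperbolic pieces'' issue you anticipate. Instead it quotes [\cite{R4}, Theorem~2.2(3a)], which already packages the entire $3$-manifold reduction and leaves only three ingredient classes $\Gamma$ (for $\Gamma\wr H$ with $H$ finite) to be checked: $\Gamma={\Bbb Z}^2\rtimes{\Bbb Z}$, $\Gamma=\pi_1$ of a closed nonpositively curved Riemannian $3$-manifold, and $\Gamma$ a directed limit of groups already satisfying the conjecture. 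These are supplied respectively by \cite{BFL} (virtually polycyclic groups), \cite{BL} (CAT(0) groups), and [\cite{FL}, Theorem~7.1]. The ``delicate gluing'' you worry about is thus entirely absorbed into \cite{R4}; the present paper's contribution is simply to observe that the required inputs are now available.
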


In \cite{R0} and \cite{R1} we proved Theorem \ref{main1} under various 
conditions. In \cite{R0} we proved it for the nonempty boundary case and 
for the situation when the manifold contains an incompressible square root 
closed torus. In \cite{R1} we assumed the manifold has positive  
first Betti number. Due to some recent developments in Geometric Topology 
(see \cite{BL}, \cite{BFL}, \cite{R2}, \cite{R3} and \cite{R4})
we are now able to deduce Theorem \ref{main1}. Also the main ideas 
from \cite{R0} and \cite{R1} go behind the proof of this general case.

The first step to prove Theorem \ref{main1} is to show that the Whitehead 
group of $\pi_1(M)$ is trivial. We deduce the following for this purpose.

\begin{thm} \label{main2} Let $G$ be isomorphic to the fundamental 
group of $M$ then  $$Wh(G)=K_{-i}(G)=\tilde K_0(G)=0$$ for 
all $i\geq 2$.\end{thm}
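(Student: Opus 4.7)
\medskip
\noindent \emph{Proof proposal.}

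The plan is to reduce Theorem \ref{main2} to the Farrell--Jones Fibered Isomorphism Conjecture (FIC) in $K$-theory for the group $G=\pi_1(M)$. Once FIC is established for $G$, the assembly map
\[
H_n^G(E_{\mathcal{VC}}G;\mathbf{K}_{\mathbb{Z}}) \longrightarrow K_n(\mathbb{Z}G)
\]
is an isomorphism. Because $G$ is torsion-free, every virtually cyclic subgroup of $G$ is either trivial or infinite cyclic, and the Bass--Heller--Swan decomposition combined with the standard Farrell--Jones argument then reduces the source to equivariant homology with coefficients in the lower $K$-theory of $\mathbb{Z}$. This forces $Wh(G) = \tilde K_0(\mathbb{Z}G) = 0$ and $K_{-i}(\mathbb{Z}G) = 0$ for all $i \geq 2$, as required.

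To establish FIC for $G$, I would first invoke Perelman's geometrization to decompose $M$. A compact aspherical $3$-manifold with torsion-free fundamental group admits a JSJ decomposition along incompressible tori (and, when $\p M \neq \emptyset$, along annuli), each piece of which is either hyperbolic or Seifert fibered. This exhibits $G$ as the fundamental group of a finite graph of groups with vertex groups the $\pi_1$ of the geometric pieces and edge groups of the form $\mathbb{Z}$ or $\mathbb{Z}^2$.

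Next I would verify FIC for each vertex group. Hyperbolic pieces have $\pi_1$ a lattice in $\mathrm{Isom}(\mathbb{H}^3)$, which is covered by Bartels--Farrell--L\"uck \cite{BFL}. Seifert fibered pieces have $\pi_1$ either virtually polycyclic or sitting in a central extension $1 \to \mathbb{Z} \to \pi_1 \to \G \to 1$ with $\G$ a Fuchsian $2$-orbifold group; for these, FIC follows by combining the CAT($0$)-group result of Bartels--L\"uck \cite{BL} with the methods developed by the author in \cite{R2}, \cite{R3} and \cite{R4}. The edge groups $\mathbb{Z}$ and $\mathbb{Z}^2$ trivially satisfy FIC.

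The main obstacle is the last step: assembling FIC for the full $G$ from its vertex and edge groups via the graph-of-groups inheritance principle. For this one must use the wreath-product (or acyclic) strengthening of FIC at each vertex, and check that the edge inclusions of $\mathbb{Z}^2$ into the vertex groups are compatible with the transfer formalism of the conjecture. These are precisely the technical advances provided by \cite{BL}, \cite{BFL}, \cite{R2}, \cite{R3}, \cite{R4}, and with them in hand the inheritance argument of \cite{R4} extends from the cases handled in \cite{R0} and \cite{R1} to cover the remaining closed aspherical $3$-manifolds with $b_1(M) = 0$, completing the proof.
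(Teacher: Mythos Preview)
Your route is essentially sound, but it is a substantially heavier argument than the one the paper actually gives, and it differs in its overall architecture.

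The paper does \emph{not} prove Theorem~\ref{main2} by first establishing the $K$-theoretic Fibered Isomorphism Conjecture for $G$. Instead it argues directly. First, Kneser--Milnor prime decomposition (not JSJ) writes $G$ as a free product of a free group with finitely many groups $G_i=\pi_1(M_i)$, each $M_i$ aspherical and irreducible. Stallings' theorem $Wh(A*B)\cong Wh(A)\oplus Wh(B)$ reduces the problem to each $G_i$. Then Geometrization says each $M_i$ is Haken, hyperbolic, or (non-Haken) Seifert fibered, and the vanishing of $Wh$, $\tilde K_0$, $K_{-i}$ is read off from classical results: Waldhausen \cite{W} for the Haken case, Farrell--Jones \cite{FJ86} for the hyperbolic case, Plotnick \cite{Pl} for the remaining Seifert fibered case, with \cite{FJ85} supplying the details for $\tilde K_0$ and the negative $K$-groups. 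No assembly-map machinery, no graph-of-groups inheritance, and no invocation of \cite{BL} or \cite{BFL} is used for this theorem; those references enter the paper only for the $L$-theory statement (Theorem~\ref{3-manifold}).

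Your approach---prove FIC for $G$ and then deduce the vanishing from torsion-freeness---would also work, and for a reader who already has \cite{BL}, \cite{BFL}, \cite{R4} at hand it is conceptually clean. What it costs is a large amount of recent technology for a result that was, in each geometric case, already known by the mid-1980s. One small gap in your write-up: you pass directly to a JSJ decomposition of $M$, but the hypothesis is only that $\pi_1(M)$ is torsion-free, not that $M$ is irreducible or aspherical. You should first apply Kneser--Milnor to split $G$ as a free product (handling the free factor and reducing to irreducible aspherical pieces), and only then run JSJ on each piece; FIC is inherited by free products, so this is easily repaired.
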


\section{Proofs of Theorems \ref{main1} and \ref{main2}}

For terminologies on $3$-manifolds used in the proofs see 
\cite{H} or \cite{R0}.

\begin{proof}[Proof of Theorem \ref{main2}]
By (Kneser-Milnor) prime decomposition
theorem $G$ is isomorphic to the free product of a free group and
finitely many groups $G_1,G_2,\ldots ,G_n$ where for each $i$, $G_i$ 
is isomorphic to the fundamental group of an  
aspherical irreducible 3-manifold $M_i$ (see [\cite{R1}, Lemma 3.1]). 
Since the Whitehead
group of a free product is the direct sum of the Whitehead groups of 
the individual factors of the free product (see \cite{St}) 
 it is enough to prove that the Whitehead group vanishes for $G_i$. Now by
the Geometrization Theorem (conjectured by Thurston and proved by 
Perelman) $M_i$ is either 
Seifert fibered, Haken or hyperbolic. The hyperbolic
case follows from some more general result of Farrell and Jones in \cite{FJ86}, for Haken 
case it follows from Waldhausen's
result in \cite{W}. For non-Haken Seifert fibered space the 
vanishing result is due to
Plotnick (see \cite{Pl}). For the reduced projective class groups 
$\tilde K_0(-)$ and for the negative 
$K$-groups $K_{-i}(-)$ the same sequence of arguments and references work. 
For details see \cite{FJ85}. In fact, more generally it is shown in \cite{FJ85}  
that $G$ is $K$-flat, i.e., $Wh(G\t {\Bbb Z}^n)=0$ for all non-negative 
integer $n$. 

This completes the proof of Theorem \ref{main2}.
\end{proof}

Below we recall the statement of the Fibered Isomorphism Conjecture 
of Farrell and Jones. For details about this conjecture see 
\cite{FJ}. Here we follow the formulation given
in [\cite{FL}, Appendix]. 

Let $\cal F$ be one of the three functors from the category of
topological spaces to the category of spectra: (a) the stable topological
pseudo-isotopy functor ${\cal P}()$; (b) the algebraic $K$-theory functor 
${\cal K}()$; and (c) the $L$-theory functor $L^{\l -\infty \r}()$. The 
$L$-theory functor also includes an orientation data, that is a 
homomorphism $\omega:\pi_1(X)\to {\Bbb Z}_2$. If the topological space
is an oriented manifold then this homomorphism is zero. 

Let $\cal M$ be a category whoes 
objects are continuous surjective maps $p:E\to B$ between
topological spaces $E$ and $B$. And a morphism between two maps $p:E_1\to
B_1$ and $q:E_2\to B_2$ is a pair of continuous maps $f:E_1\to E_2$,
$g:B_1\to B_2$ such that the following diagram commutes. 

$$\diagram
E_1 \rto^f \dto^p & E_2 \dto^q\\
B_1 \rto^g &B_2\enddiagram$$

There is a functor defined by Quinn in \cite{Q} from $\cal M$ to the category
of $\Omega$-spectra which associates to the map $p$ a spectrum ${\Bbb 
H}(B, {\cal F}(p))$ with the property that ${\Bbb H}(B, {\cal F}(p))={\cal
S}(E)$ if $B$ is a single point space. For an explanation of ${\Bbb H}(B,
{\cal F}(p))$ see [\cite{FJ}, Section 1.4]. Also the map ${\Bbb H}(B,
{\cal F}(p))\to {\cal F}(E)$ induced by the morphism: id$:E\to E$; 
$B\to *$ in the category $\cal M$ is called the {\it Quinn assembly map}.

Let $\Gamma$ be a discrete group and $\cal E$ be a $\Gamma$-space which
is universal for the class of all virtually cyclic subgroups of $\Gamma$ 
and denote ${\cal E}/\Gamma$ by $\cal B$. For definition and properties 
of universal 
space see [\cite{FJ}, Appendix]. Let $X$ be a space on which $\Gamma$ acts
freely and properly discontinuously and $p:X\times_{\Gamma} {\cal E}\to
{\cal E}/{\Gamma}={\cal B}$ be the map induced by the projection onto the 
second factor of $X\times {\cal E}$. 

The {\it Fibered Isomorphism Conjecture} for $\G$ states that the map $${\Bbb
H}({\cal B}, {\cal F}(p))\to {\cal F}(X\times_{\Gamma} {\cal E})={\cal
F}(X/\Gamma)$$ is an (weak) equivalence of spectra. The equality in the
above display is induced by the map $X\times_{\Gamma}{\cal E}\to X/\Gamma$
and using the fact that $\cal F$ is homotopy invariant. If $X$ is simply 
connected then this is called the {\it Isomorphism Conjecture} for $\G$.  

In this paper we consider the case when ${\cal F}()=L^{\l -\infty \r}()$. 
We have already mentioned that this $L$-theory functor contains 
the orientation data $\omega:\G \to {\Bbb Z}_2$ so as to include the 
case of nonorientable manifolds.

Let us now deduce the following theorem which is an immediate consequence 
of [\cite{R4}, $3(a)$ of Theorem 2.2] and some recent results from  
\cite{BL} and \cite{BFL}.

\begin{thm}\label{3-manifold} Let $G$ be isomorphic to the 
fundamental group of a 
$3$-manifold. Then the Farrell-Jones Fibered Isomorphism conjecture 
in $L^{\l -\infty \r}$-theory is true for $G\wr H$ where $H$ is some finite 
group.\end{thm}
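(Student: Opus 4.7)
The plan is to reduce, via the structural result [\cite{R4}, Theorem 2.2, $3(a)$], the $L^{\langle -\infty\rangle}$-theoretic Fibered Isomorphism Conjecture for $G\wr H$ to the conjecture (still in the wreath-product-with-a-finite-group form) for the fundamental groups of the elementary geometric pieces of a $3$-manifold, and then to dispose of each of these pieces using the recent verifications in \cite{BL} and \cite{BFL}. The wreath-product-with-a-finite-group formulation is essential, because the conjecture in this form is closed under free products, under graphs of groups with virtually cyclic edge groups, and under finite-index overgroups, which are exactly the operations encountered when one unpacks a general compact $3$-manifold into its building blocks.

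First I would apply the Kneser--Milnor prime decomposition to write $G$ as a free product of a free group and fundamental groups of aspherical irreducible compact $3$-manifolds; stability of the wreath form of the conjecture under free products reduces the problem to the irreducible case. Next I would invoke the Geometrization Theorem: an aspherical irreducible compact $3$-manifold is either Seifert fibered, carries the Sol geometry, is hyperbolic, or decomposes along incompressible tori (the JSJ decomposition) into Seifert fibered and hyperbolic pieces. Item $3(a)$ of [\cite{R4}, Theorem 2.2] then reduces the wreath form of the conjecture for the full $3$-manifold group to the same conjecture for the fundamental groups of the individual pieces, absorbing the amalgamated-product structure along the JSJ tori and the Seifert fibrations into the wreath product device.

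To handle the pieces themselves I would appeal to the recent work cited. For hyperbolic pieces the fundamental group is word hyperbolic (in the closed case) or relatively hyperbolic with virtually abelian cusp subgroups (in the finite-volume cusped case); in either case, \cite{BL} verifies the $L^{\langle -\infty\rangle}$-theoretic Farrell--Jones conjecture in the wreath-product form. For Seifert fibered and Sol pieces the fundamental group acts properly, cocompactly and by isometries on a finite-dimensional CAT$(0)$-space, and \cite{BFL} verifies the conjecture in the wreath-product form for all such CAT$(0)$-groups.

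The main obstacle, or rather the step requiring the most care, is bookkeeping: one must check that the hypothesis on the pieces in [\cite{R4}, Theorem 2.2, $3(a)$] is precisely the wreath-product-with-a-finite-group formulation of the conjecture that \cite{BL} and \cite{BFL} now supply as output, and that the peripheral (cusp) and exceptional-fiber subgroups are correctly fed into the inheritance machine. Once this matching is verified, the theorem follows immediately by plugging the \cite{BL} and \cite{BFL} outputs into the reduction of \cite{R4}.
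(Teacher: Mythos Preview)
Your overall strategy---reduce via [\cite{R4}, Theorem~2.2, $3(a)$] and then feed in the results of \cite{BL} and \cite{BFL}---is the same as the paper's, but the execution contains genuine gaps. The hypotheses that [\cite{R4}, Theorem~2.2, $3(a)$] actually asks you to verify are that the wreath form of the conjecture holds for (i) $\Bbb{Z}^2\rtimes_\sigma\Bbb{Z}$ for every $\sigma$, (ii) fundamental groups of closed nonpositively curved Riemannian $3$-manifolds, and (iii) directed limits of groups for which the wreath form is already known. You do not check these; instead you re-run a Kneser--Milnor/JSJ argument and end up with a different list (Seifert, Sol, hyperbolic pieces) whose treatment is then incorrect on two counts. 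First, your assertion that Seifert fibered and Sol groups are $CAT(0)$ is false: Nil, $\widetilde{PSL_2(\Bbb{R})}$, and Sol geometries are not nonpositively curved, and their fundamental groups are not $CAT(0)$ (for instance, a nilpotent $CAT(0)$ group is virtually abelian, which the discrete Heisenberg group is not). Second, you attribute the $CAT(0)$ case to \cite{BFL}; in fact \cite{BL} is the source for hyperbolic and $CAT(0)$ groups, while \cite{BFL} supplies the virtually polycyclic case, which is precisely how $\Bbb{Z}^2\rtimes_\sigma\Bbb{Z}$ (and hence the Nil, Sol, and Euclidean pieces) is handled in the paper.

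There is also a missing ingredient: the theorem is stated for the fundamental group of an arbitrary $3$-manifold, not just a compact one, so Kneser--Milnor and JSJ do not apply directly. The paper closes this gap with the direct-limit inheritance from [\cite{FL}, Theorem~7.1] (item (iii) above), which your argument omits entirely. In short, the correct route is to take the three hypotheses of [\cite{R4}, Theorem~2.2, $3(a)$] as given, dispatch (i) via \cite{BFL} (virtually polycyclic), (ii) via \cite{BL} ($CAT(0)$), and (iii) via \cite{FL}; your attempt to substitute a hand-rolled geometric decomposition both mismatches the cited reduction and rests on the false $CAT(0)$ claim for Seifert and Sol groups.
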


\begin{proof} The theorem follows from [\cite{R4}, $3(a)$ of Theorem 2.2] 
provided we show that the conjecture is true for $\Gamma\wr H$ 
where $H$ is some finite group and $\Gamma$ belongs to the following 
classes of groups:

1). ${\Bbb Z}^2\rtimes_{\sigma} {\Bbb Z}$ for all actions $\sigma$ 
of $\Bbb Z$ on ${\Bbb Z}^2$.

2). Fundamental groups of closed nonpositively curved Riemannian 
$3$-manifolds.

3). $\G\simeq \lim_{i\in I}\G_i$ where $\{\G_i\}$ is a directed 
system of groups so that for each $i\in I$ the conjecture is 
true for $\G_i\wr K$ where $K$ is some finite group.

We now note the following to complete the proof of the 
Theorem.

(1) follows from \cite{BFL} where the conjecture is proved for 
virtually polycyclic groups.

(2) follows from \cite{BL} where the conjecture is proved for 
finite dimensional $CAT(0)$-groups.

And (3) follows from [\cite{FL}, Theorem 7.1].
\end{proof}

\begin{proof}[Proof of Theorem \ref{main1}] If $\p M\neq\emptyset$ 
then the theorem follows from [\cite{R0}, Theorem 1.1]. 
Therefore we can assume that 
$M$ is closed. Now recall that the combination of 
Theorems \ref{main2} and 
\ref{3-manifold} imply the isomorphism of the classical 
assembly map in $L$-theory. Namely, the map 
$H_k(BG, {\Bbb L}_0)\to L_k(G)$ is an isomorphism for all $k$. Since 
$M$ aspherical it is a model of $BG$, thus we have the 
isomorphism $H_k(M,{\Bbb L}_0)\to L_k(G)$. 
See the proof of [\cite{LR}, Theorem 1.28] or 
[\cite{R5}, Corollary 5.3] for a detailed argument.

Next we recall the definition of structure set and 
the surgery exact sequence.

Let $M$ be a compact manifold with boundary (may be empty) 
so that $Wh(\pi_1(M))=0$.
Consider all objects $(N, \partial N, f)$, 
where $N$ is a manifold with boundary $\partial N$ 
and $f: N\to M$ is a homotopy equivalence such that 
$f|_{\partial N}: \partial N\to \partial M$ is a 
homeomorphism. Two such objects $(N_1, \partial N_1, f_1)$ 
and $(N_2, \partial N_2, f_2)$ are equivalent if there 
is a homeomorphism $g:N_1\to N_2$ such that the obvious 
diagram commutes up to homotopy relative to the boundary. 
The equivalence classes of these objects is the homotopy-
topological structure set ${\cal S}(M, \partial M)$.

In \cite{Ra} Ranicki defined homotopy functors ${\cal S}_k(X)$ from 
the category of topological spaces to the category of abelian groups 
which fit into the following exact sequence: $$\cdots\longrightarrow 
{\cal S}_k(X)\longrightarrow H_k(X, {\Bbb L}_0)\longrightarrow 
L_k(\pi_1(X))\longrightarrow {\cal S}_{k-1}(X)\longrightarrow\cdots.$$

Also it is shown in \cite{Ra} that there is a bijection between 
${\cal S}(M\times {\Bbb D}^k, \partial (M\times {\Bbb D}^k))$ and 
${\cal S}_{k+dim M}(M)$ provided $dim M+k \geq 5$.

The proof of the theorem is now complete since we have already proved   
the isomorphism $H_k(M,{\Bbb L}_0)\to L_k(G)$ for all $k$.
\end{proof} 

\newpage
\bibliographystyle{plain}
\ifx\undefined\bysame
\newcommand{\bysame}{\leavevmode\hbox to3em{\hrulefill}\,}
\fi

\medskip

\end{document}